\providecommand{\U}[1]{\protect\rule{.1in}{.1in}}
\providecommand{\U}[1]{\protect\rule{.1in}{.1in}}
\providecommand{\U}[1]{\protect\rule{.1in}{.1in}}
\providecommand{\U}[1]{\protect\rule{.1in}{.1in}}
\providecommand{\U}[1]{\protect\rule{.1in}{.1in}}
\providecommand{\U}[1]{\protect\rule{.1in}{.1in}}
\providecommand{\U}[1]{\protect\rule{.1in}{.1in}}
\providecommand{\U}[1]{\protect\rule{.1in}{.1in}}
\providecommand{\U}[1]{\protect\rule{.1in}{.1in}}
\providecommand{\U}[1]{\protect\rule{.1in}{.1in}}
\providecommand{\U}[1]{\protect\rule{.1in}{.1in}}
\providecommand{\U}[1]{\protect\rule{.1in}{.1in}}
\providecommand{\U}[1]{\protect\rule{.1in}{.1in}}
\providecommand{\U}[1]{\protect\rule{.1in}{.1in}}
\providecommand{\U}[1]{\protect\rule{.1in}{.1in}}
\providecommand{\U}[1]{\protect\rule{.1in}{.1in}}
\providecommand{\U}[1]{\protect\rule{.1in}{.1in}}
\providecommand{\U}[1]{\protect\rule{.1in}{.1in}}
\providecommand{\U}[1]{\protect\rule{.1in}{.1in}}
\providecommand{\U}[1]{\protect\rule{.1in}{.1in}}
\providecommand{\U}[1]{\protect\rule{.1in}{.1in}}
\providecommand{\U}[1]{\protect\rule{.1in}{.1in}}
\providecommand{\U}[1]{\protect\rule{.1in}{.1in}}
\providecommand{\U}[1]{\protect\rule{.1in}{.1in}}
\providecommand{\U}[1]{\protect\rule{.1in}{.1in}}
\providecommand{\U}[1]{\protect\rule{.1in}{.1in}}
\providecommand{\U}[1]{\protect\rule{.1in}{.1in}}
\providecommand{\U}[1]{\protect\rule{.1in}{.1in}}
\providecommand{\U}[1]{\protect\rule{.1in}{.1in}}
\providecommand{\U}[1]{\protect\rule{.1in}{.1in}}
\providecommand{\U}[1]{\protect\rule{.1in}{.1in}}
\providecommand{\U}[1]{\protect\rule{.1in}{.1in}}
\providecommand{\U}[1]{\protect\rule{.1in}{.1in}}
\providecommand{\U}[1]{\protect\rule{.1in}{.1in}}
\providecommand{\U}[1]{\protect\rule{.1in}{.1in}}
\providecommand{\U}[1]{\protect\rule{.1in}{.1in}}
\providecommand{\U}[1]{\protect\rule{.1in}{.1in}}
\providecommand{\U}[1]{\protect\rule{.1in}{.1in}}
\newtheorem{theorem}{Theorem}
{}
\newtheorem{corollary}{Corollary}
\newtheorem{remark}{Remark}
\newenvironment{proof}[1][Proof]{\textbf{#1.} }{\ \rule{0.5em}{0.5em}}
\begin{document}

\title{Isospectral \ Mathieu-Hill Operators }
\author{O. A. Veliev\\{\small Depart. of Math., Dogus University, Ac\i badem, Kadik\"{o}y, \ }\\{\small Istanbul, Turkey.}\ {\small e-mail: oveliev@dogus.edu.tr}}
\date{}
\maketitle

\begin{abstract}
In this paper we prove that the spectra of the \ Mathieu-Hill Operators with
potentials $ae^{-i2\pi x}+be^{i2\pi x}$ and $ce^{-i2\pi x}+de^{i2\pi x}$ are
the same if and only if $ab=cd,$ where $a,b,c$ and $d$ are complex numbers.
This implies some corollaries about the extension of Harrell-Avron-Simon
formula. Moreover, we find explicit formulas for \ the eigenvalues and
eigenfunctions of the $t$-periodic boundary value problem for the Hill
operator with Gasymov's potential.

Key Words: Mathieu-Hill operator, Spectrum, Isospectral operators.

AMS Mathematics Subject Classification: 34L05, 34L20.

\end{abstract}

Let $H(a,b)$ be the Hill operator generated in $L_{2}(-\infty,\infty)$ by the expression%

\begin{equation}
-y^{^{\prime\prime}}(x)+q(x)y(x)
\end{equation}
with potential
\begin{equation}
q(x)=ae^{-i2\pi x}+be^{i2\pi x},
\end{equation}
where $a$ and $b$ are complex numbers. It is well-known that (see [4, 8]) the
spectrum $S(H(a,b))$ of the operator $H(a,b)$ is the union of the spectra
$S(H_{t}(a,b))$ of the operators $H_{t}(a,b)$ for $t\in(-\pi,\pi],$ where
$H_{t}(a,b)$ is the operator generated in $L_{2}[0,1]$ by (1) with potential
(2) and by the boundary conditions
\begin{equation}
y(1)=e^{it}y(0),\text{ }y^{^{\prime}}(1)=e^{it}y^{^{\prime}}(0).
\end{equation}

First we prove that if $ab=cd$, then%
\begin{equation}
S(H(a,b))=S(H(c,d)),\text{ }S(H_{t}(a,b))=S(H_{t}(c,d))
\end{equation}
for all $t\in(-\pi,\pi].$ For this we obtain the asymptotic formulas, uniform
with respect to

$t\in\lbrack\rho,\pi-\rho],$ for eigenvalues and eigenfunctions of the
operators $H_{t},$ where $\rho$ is a fixed number from the interval
$(0,\frac{\pi}{2}).$ Note that the formula\ $f(k,t)=O(h(k))$ is said to be
uniform with respect to $t$ in a set $I$ if there exist positive constants $M$
and $N,$ independent of $t,$ such that $\mid f(k,t))\mid<M\mid h(k)\mid$ for
all $t\in I$ and $\mid k\mid\geq N.$

To obtain the uniform asymptotic formulas for eigenvalues $\lambda_{n}(t)$ and
corresponding normalized eigenfunctions $\Psi_{n,t}(x)$ for $t\in\lbrack
\rho,\pi-\rho]$, as $n\rightarrow\infty$ we use the formulas
\begin{equation}
(\lambda_{n}(t)-(2\pi n+t)^{2})(\Psi_{n,t},e^{i(2\pi n+t)x})=(q\Psi
_{n,t},e^{i(2\pi n+t)x})
\end{equation}
and
\begin{equation}
(\lambda_{n}(t)-(2\pi(n-k)+t)^{2})(\Psi_{n,t},e^{i(2\pi(n-k)+t)x}%
)=(q\Psi_{n,t},e^{i(2\pi(n-k)+t)x}),
\end{equation}
where $(.,.)$ is the inner product in $L_{2}[0,1]$. Formulas (5) and (6) can
be obtained from
\begin{equation}
-\Psi_{n,t}^{^{\prime\prime}}(x)+q(x)\Psi_{n,t}=\lambda_{n}(t)\Psi_{n,t}(x)
\end{equation}
by multiplying $e^{i(2\pi n+t)x}$ and $e^{i(2\pi(n-k)+t)x}$ respectively.

The uniform asymptotic formulas for the operator $L_{t}(q),$with $q\in
L_{1}[0,1],$ generated in $L_{2}[0,1]$ by (1) and (3) is obtained in [9],
where we proved the following:

\textit{The large eigenvalue }$\lambda_{n}(t)$\textit{ \ and the corresponding
eigenfunction }$\Psi_{n,t}(x)$\textit{ of the operator }$L_{t}(q)$\textit{ for
}$t\neq0,\pi,$\textit{ satisfy the following asymptotic formulas }%
\begin{equation}
\lambda_{n}(t)=(2\pi n+t)^{2}+O(\frac{ln\left\vert n\right\vert }{n}),\text{
}\Psi_{n,t}(x)=e^{i(2\pi n+t)x}+O(\frac{1}{n}).
\end{equation}
\textit{These asymptotic formulas are uniform with respect to }$t$\textit{ in
}$[\rho,\pi-\rho],$ \textit{where }$\rho$\textit{ is a fixed number from
}$(0,\frac{\pi}{2}).$\textit{ There exists a positive number }$N(\rho
),$\textit{ independent of }$t,$\textit{ such that the eigenvalues }%
$\lambda_{n}(t)$\textit{ for \ }$t\in\lbrack\rho,\pi-\rho]$\textit{ and}$\mid
n\mid>N(\rho)$\textit{ are simple.}

In [9], we obtained (8) by iteration of the formula (5). However, for the
convenience of the readers and taking into account that we need to consider
the terms of the asymptotic formulas in detail, we repeat the iteration here.
Using (2) in (5) we get
\begin{equation}
(\lambda_{n}(t)-(2\pi n+t)^{2})(\Psi_{n,t}(x),e^{i(2\pi n+t)x})=\sum_{n_{1}%
}q_{n_{1}}(\Psi_{n,t}(x),e^{i(2\pi(n-n_{1})+t)x}),
\end{equation}
where
\begin{equation}
q_{n}=(q(x),e^{i2\pi nx}),\text{ }q_{-1}=a,\text{ }q_{1}=b,\text{ }%
q_{n}=0,\text{ }\forall n\neq\pm1.
\end{equation}
In (6) replacing $k$ by $n_{1}$ and then using (2) we get
\begin{equation}
(\lambda_{n}(t)-(2\pi(n-n_{1})+t)^{2})(\Psi_{n,t},e^{i(2\pi(n-n_{1}%
)+t)x})=\sum_{n_{2}}q_{n_{2}}(\Psi_{n,t},e^{i(2\pi(n-n_{1}-n_{2})+t)x}).
\end{equation}
Let
\[
U(n,t)=\{\lambda\in\mathbb{C}:\left\vert \lambda-(2\pi n+t)^{2}\right\vert
\leq1\},\text{ }n\in\mathbb{Z}\text{.}%
\]
By (8) the disk $U(n,t)$ for $t\in\lbrack\rho,\pi-\rho]$ and $\left\vert
n\right\vert >N(\rho)$ contains only one simple eigenvalue denoted by
$\lambda_{n}(t).$ Moreover, it is well known that [4] for $t=0$ and
$\left\vert n\right\vert \gg1$ the disk $U(n,t)$ contains two eigenvalues,
denoted here by $\lambda_{n}(0)$ and $\lambda_{-n}(0)$. One can readily see
that if $t\in\lbrack\rho,\pi-\rho],$ $k\neq0$ and $\left\vert n\right\vert
>N(\rho)\gg1,$ then
\begin{equation}
\left\vert \lambda_{n}(t)-(2\pi(n-k)+t)^{2}\right\vert >\left\vert
n\right\vert \rho,\text{ }\left\vert \lambda-(2\pi(n-k)+t)^{2}\right\vert
>\left\vert n\right\vert \rho.
\end{equation}
for all $t\in\lbrack\rho,\pi-\rho]$ and $\lambda\in U(n,t).$ 

Now we itarate (9) as follows. By (12), the last multiplicand $(\Psi
_{n,t},e^{i(2\pi(n-n_{1})+t)x})$ in (9) can be replaced with the right-hand
side of (11) divided by $\lambda_{n}(t)-(2\pi(n-n_{1})+t)^{2}.$ Doing this
replacement we get
\begin{equation}
(\lambda_{n}(t)-(2\pi n+t)^{2})(\Psi_{n,t},e^{i(2\pi n+t)x})=\sum_{n_{1}%
,n_{2}}\frac{q_{n_{1}}q_{n_{2}}(\Psi_{n,t},e^{i(2\pi(n-n_{1}-n_{2})+t)x}%
)}{\lambda_{n}(t)-(2\pi(n-n_{1})+t)^{2}}.
\end{equation}
Now we isolate the terms in right-hand side of (13) containing the multiplicand

$(\Psi_{n,t},e^{i(2\pi n+t)x})$ which occurs in the case $n_{1}+n_{2}=0$ and
apply the above replacement to the other terms (i.e., case $n_{1}+n_{2}\neq0$
) to get
\[
(\lambda_{n}(t)-(2\pi n+t)^{2})(\Psi_{n,t},e^{i(2\pi n+t)x})=\sum_{n_{1}}%
\frac{q_{n_{1}}q_{-n_{1}}(\Psi_{n,t},e^{i(2\pi n+t)x})}{\lambda_{n}%
(t)-(2\pi(n-n_{1})+t)^{2}}+
\]%
\begin{equation}
\sum_{n_{1},n_{2},n_{3}}\frac{q_{n_{1}}q_{n_{2}}q_{n_{3}}(\Psi_{n,t}%
,e^{i(2\pi(n-n_{1}-n_{2}-n_{3})+t)x})}{(\lambda_{n}(t)-(2\pi(n-n_{1}%
)+t)^{2})(\lambda_{n}(t)-(2\pi(n-n_{1}-n_{2})+t)^{2})}.
\end{equation}
Repeating this process $m-$times (i.e., in the second row of (14), isolating
the terms containing the multiplicand $(\Psi_{n,t},e^{i(2\pi n+t)x})$ applying
the above replacement to the other terms, etc.) we obtain
\begin{equation}
(\lambda_{n}(t)-(2\pi n+t)^{2})(\Psi_{n,t},e^{i(2\pi n+t)x})=A_{m}(\lambda
_{n}(t))(\Psi_{n,t},e^{i(2\pi n+t)x})+R_{m+1}(\lambda_{n}(t)),
\end{equation}
where $A_{m}(\lambda)=\sum_{k=1}^{m}a_{k}(\lambda),$%
\begin{equation}
a_{k}(\lambda)=\sum_{n_{1},n_{2},...,n_{k}}\frac{q_{n_{1}}q_{n_{2}}%
...q_{n_{k}}q_{-n_{1}-n_{2}-...-n_{k}}}{%
{\textstyle\prod\limits_{s=1,2,...,k}}
[\lambda-(2\pi(n-n_{1}-n_{2}-...-n_{s})+t)^{2}]},
\end{equation}%
\begin{equation}
R_{m+1}(\lambda)=\sum_{n_{1},n_{2},...,n_{m+1}}\frac{q_{n_{1}}q_{n_{2}%
}...q_{n_{m}}q_{n_{m+1}}(\Psi_{n,t}(x),e^{i(2\pi(n-n_{1}-n_{2}-...-n_{m+1}%
)+t)x})}{%
{\textstyle\prod\limits_{s=1,2,...,m+1}}
[\lambda-(2\pi(n-n_{1}-n_{2}-...-n_{s})+t)^{2}]}%
\end{equation}
and by (10)%
\begin{equation}
\{n_{1},n_{2},...,n_{k},-n_{1}-n_{2}-...-n_{k}\}\subset\{-1,1\}.
\end{equation}

Let $k$ be an even number: $k=2p.$ Then\ $-n_{1}-n_{2}-...-n_{k}$ is also an
even number, since the numbers $n_{1},n_{2},...,n_{k}$ are $-1$ or $1$ (see
(18)). Therefore by (10), $q_{-n_{1}-n_{2}-...-n_{k}}=0$ and by (16)%
\begin{equation}
a_{2p}(\lambda,t)=0,\text{ }\forall p=1,2,...
\end{equation}

Let $k$ be an odd number: $k=2p-1$. Then the set in the left-hand side of (18)
contains $2p$ numbers that are $-1$ or $1$ and their total sum is $0.$ Hence
$p$ of those numbers are $-1$ and $p$ of those are $1.$ Therefore, by (10) and
(16), $a_{k}(\lambda,t)$ for $k=2p-1$ is the sum of $2^{k}$ terms of the form
\begin{equation}
(ab)^{p}%
{\textstyle\prod\limits_{s=1,2,...,k}}
\left(  \lambda-(2\pi(n-n_{1}-n_{2}-...-n_{s})+t)^{2}\right)  ^{-1}.
\end{equation}
Thus we have
\begin{equation}
\text{ }a_{2p-1}(\lambda,t)=2^{2p-1}(ab)^{p}f_{p}(\lambda,t),\text{ }%
\end{equation}
where $f_{p}(\lambda,t)$ does not depend on $a$ and $b$ and, by (12),
satisfies the following, uniform with respect to $t\in\lbrack\rho,\pi-\rho],$
equality
\begin{equation}
\text{ }f_{p}(\lambda,t)=O((n\rho)^{-2p+1})
\end{equation}
for $\lambda\in U(n,t).$ In the same way we obtain
\begin{equation}
R_{m+1}=O((2M)^{m+1}(n\rho)^{-m-1}),
\end{equation}
where $M=\max\{\mid a\mid,\mid b\mid\}.$ Letting $m$ tend to infinity in (15)
and using (23) we obtain that $\lambda_{n}(t)$ is a root of the equation
\begin{equation}
\lambda-(2\pi n+t)^{2}=A(\lambda,t,ab),
\end{equation}
where, by (19),
\begin{equation}
A(\lambda,t,ab)=\sum_{p=1}^{\infty}a_{2p-1}(\lambda,t).
\end{equation}
It follows from (21) and (22) that $A(\lambda,t,ab),$ for fixed $t,$ is an
analytic function of \textit{ } $\lambda\in U(n,t)$ satisfying the following,
uniform with respect to $t\in\lbrack\rho,\pi-\rho],$ asymptotic formula
\[
A(\lambda,t,ab)=O(n^{-1}).
\]
Therefore the inequality \
\begin{equation}
\mid A(\lambda,t,ab)\mid<\mid(\lambda-(2\pi n+t)^{2}\mid
\end{equation}
holds for all $\lambda$ from the boundary of $U(n,t).$ Since the function
$\lambda-(2\pi n+t)^{2}$ has one root in \ the \ set $U(n,t),$ by the Rouche's
theorem (24) has also one root in the same\ set.\ On the other hand,
$\lambda_{n}(t)$ is a root of (24) lying in $U(n,t).$ Therefore $\lambda\in
U(n,t)$ is an eigenvalue of $H_{t}(a,b)$ if and only if it is a root of (24).

\begin{theorem}
If $ab=cd,$ then (4) holds.
\end{theorem}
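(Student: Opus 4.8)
The idea is to reduce the statement to a single identity for the Hill discriminant and then to prove that identity by translating the potential by a \emph{complex} amount.

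Equation (24) already yields part of the conclusion. For every $\rho\in(0,\tfrac{\pi}{2})$, every $t\in[\rho,\pi-\rho]$ and every $n$ with $|n|>N(\rho)$, the only point of $S(H_{t}(a,b))$ in $U(n,t)$ is the unique root in $U(n,t)$ of $\lambda-(2\pi n+t)^{2}=A(\lambda,t,ab)$, and the right-hand side depends on $a,b$ only through the product $ab$. Since $ab=cd$ and $\rho$ is arbitrary, it follows that for every $t\in(0,\pi)$ the operators $H_{t}(a,b)$ and $H_{t}(c,d)$ have the same eigenvalues in all but finitely many of the disks $U(n,t)$. What is missing are the finitely many remaining eigenvalues for such $t$ and the cases $t=0,\pi$; for these the asymptotic formula (24) says nothing, so a different tool is needed.

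To close the gap I would pass to the discriminant $\Delta(\lambda;a,b)$, the trace of the monodromy matrix over the period $1$ of the potential $q$ in (2). It is an entire function of $\lambda$ depending continuously (indeed analytically) on $a,b$, and a short computation with a fundamental system shows that $\lambda$ is an eigenvalue of $H_{t}(a,b)$ exactly when $\Delta(\lambda;a,b)=2\cos t$, so that $S(H_{t}(a,b))=\{\lambda:\Delta(\lambda;a,b)=2\cos t\}$; together with the band decomposition recalled from [4,8], $S(H(a,b))=\bigcup_{t\in(-\pi,\pi]}S(H_{t}(a,b))$. Hence the whole theorem follows once one knows
\[
\Delta(\lambda;a,b)=\Delta(\lambda;c,d)\qquad\text{whenever }ab=cd .
\]
To prove this, let $Y(x)$ be the transition matrix of the associated first-order system with $Y(0)=I$; since the potential has period $1$ one gets $Y(x+1)=Y(x)Y(1)$, so the monodromy matrix based at any point $x_{0}$ equals $Y(x_{0})Y(1)Y(x_{0})^{-1}$ and has trace $\Delta(\lambda;a,b)$. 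Because $q$ in (2) extends to an entire function of $x$, this remains valid for \emph{complex} $x_{0}$, and $q(x+x_{0})$ is precisely the potential (2) with $(a,b)$ replaced by $(ae^{-i2\pi x_{0}},be^{i2\pi x_{0}})$; therefore $\Delta(\lambda;a,b)=\Delta(\lambda;ae^{-i2\pi x_{0}},be^{i2\pi x_{0}})$ for all $x_{0}\in\mathbb{C}$. If $a\neq0$ and $c\neq0$, choose $x_{0}$ with $e^{-i2\pi x_{0}}=c/a$; then $ae^{-i2\pi x_{0}}=c$ and $be^{i2\pi x_{0}}=ab/c=cd/c=d$, which is the required identity. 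If $ab=cd=0$, let $\operatorname{Im}x_{0}\to\pm\infty$ in the same identity and use continuity of $\Delta$ in the coefficients to get $\Delta(\lambda;c,0)=\Delta(\lambda;0,0)=\Delta(\lambda;0,d)$ for all $c,d$, so every pair with vanishing product shares the discriminant of $(0,0)$.

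The main obstacle I anticipate is not the algebra but the careful justification, for these non-self-adjoint operators, of the spectral characterization $S(H_{t}(a,b))=\{\Delta(\lambda;a,b)=2\cos t\}$ and of the reductions in the degenerate case — in particular, checking that no spectrum is lost at $t=0,\pi$, where the monodromy matrix may fail to be diagonalizable and $\Delta=\pm2$. Once this bookkeeping is in place, the complex-shift identity for $\Delta$ gives (4) at once, and it is consistent with, and in fact explains, the agreement of the high eigenvalues already visible in (24).
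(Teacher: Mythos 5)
Your argument is correct, but it reaches the conclusion by a genuinely different route from the paper. The paper also reduces everything to the equality of the Hill discriminants $F\equiv G$ of $H_t(a,b)$ and $H_t(c,d)$, but it establishes that equality by first using the iteration machinery (15)--(26) to show that for $t\in[\rho,\pi-\rho]$ and $|n|>N(\rho)$ the eigenvalue is the unique root in $U(n,t)$ of equation (24), whose right-hand side depends only on $ab$; since these eigenvalues are simple, $\{\lambda_n(t):t\in[\rho,\pi-\rho]\}$ is an analytic arc on which $F$ and $G$ agree, and the identity theorem then gives $F\equiv G$ everywhere. You instead prove $\Delta(\lambda;a,b)=\Delta(\lambda;c,d)$ directly from the complex-shift invariance $\Delta(\lambda;a,b)=\Delta(\lambda;ae^{-i2\pi x_0},be^{i2\pi x_0})$: the trace of the monodromy matrix is independent of the base point for real $x_0$, both sides are entire in $x_0$, so the identity persists for complex $x_0$, and the orbit of $(a,b)$ under this shift is exactly the level set $\{(\alpha,\beta):\alpha\beta=ab\}$ apart from the degenerate pairs, which you correctly recover by letting $\operatorname{Im}x_0\to\pm\infty$ and using continuity of $\Delta$ in the coefficients. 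This makes your first paragraph (the appeal to (24)) logically superfluous: the shift argument alone proves the theorem, bypasses the entire asymptotic analysis, and moreover explains conceptually why only the product $ab$ can matter. What the paper's longer route buys is the explicit equation (24) and the expansion (25), which are reused in Theorems 2--4 and Corollary 1, so the machinery is not wasted there. One small reassurance: your worry about $t=0,\pi$ is unfounded, since $\lambda\in S(H_t)$ iff $e^{it}$ is a root of $\mu^2-\Delta(\lambda)\mu+1=0$, i.e. iff $\Delta(\lambda)=2\cos t$, and this holds whether or not the monodromy matrix is diagonalizable; this Floquet characterization is exactly what the paper cites from [6,9] in the form $F(\lambda)=\cos t$, so both proofs rest on the same external input at that point.
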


\begin{proof}
One can readily see from (21) and (25) that if $ab=cd$ then
\begin{equation}
A(\lambda,t,ab)=A(\lambda,t,cd).
\end{equation}
Let $\mu_{n}(t)$ be the eigenvalue of $H_{t}(c,d)$ lying $U(n,t).$ By (27),
both $\lambda_{n}(t)$ and $\mu_{n}(t)$ are the roots of the same equation (24)
which has unique root in $U(n,t).$ Therefore we have
\begin{equation}
\lambda_{n}(t)=\mu_{n}(t),\text{ }\forall t\in\lbrack\rho,\pi-\rho].
\end{equation}
On the other hand, $\lambda_{n}(t)$ and $\mu_{n}(t)$ are the roots of the
equations
\begin{equation}
F(\lambda)=\cos t,\text{ }G(\lambda)=\cos t,
\end{equation}
where $F(\lambda)$ and $G(\lambda)$ are the Hill's discriminants of the
operators $H_{t}(a,b)$ and $H_{t}(c,d)$ respectively (see [6,9]). Since the
eigenvalue $\lambda_{n}(t)$ for $t\in\lbrack\rho,\pi-\rho]$ and $\mid
n\mid>N(\rho)$ is simple the set $\{\lambda_{n}(t):t\in\lbrack\rho,\pi
-\rho]\}$ is an analytic arc. By (28) and (29) the entire functions
$F(\lambda)$ and $G(\lambda)$ coincide on this arc. Therefore these functions
are identically equal in the complex plane and hence the eigenvalues of
$H_{t}(a,b)$ and $H_{t}(c,d)$ are the roots of the same equation (29) for all
$t\in(-\pi,\pi],$ that is, (4) holds
\end{proof}

\begin{remark}
Note that to prove Theorem 1 we investigated the simplest case $t\in
\lbrack\rho,\pi-\rho]$. In the paper [10] we obtained the uniform asymptotic
formulas in the more complicated case $t\in\lbrack0,\rho]\cup\lbrack\pi
-\rho,\pi].$ In the same way one can prove Theorem1 by using the formulas of
[10]. Indeed, in [10] we proved that (see Theorem 2 and 4 of [10]) \textit{the
eigenvalue }$\lambda_{n}(t)$ for $t\in\lbrack0,\rho]$ and $n>N\gg1,$ is simple
and satisfies the equality
\begin{equation}
(\lambda-(2\pi n+t)^{2}-A(\lambda,t))(\lambda-(2\pi n-t)^{2}-A^{^{\prime}%
}(\lambda,t))=B(\lambda,t)B^{^{\prime}}(\lambda,t),
\end{equation}
where $A(\lambda,t)$ and $A^{^{\prime}}(\lambda,t)$ are defined as (25) and
hence depend only on $ab$ (see (8)-(14) of [10]). The functions $B(\lambda,t)$
and $B^{^{\prime}}(\lambda,t)$ are the sum of $b_{2n+2m-1}(\lambda,t)$ and
$b_{2n+2m-1}^{^{\prime}}(\lambda,t)$ respectively for $m=0,1,2,...$ Moreover,
from (46), and (10) of [10] one can readily see that $2n+m$ of the indices
$n_{1},n_{2},...,n_{2n+2m-1},2n-n_{1}-n_{2}-...-n_{2n+2m-1}$ taking part in
$b_{2n+2m-1}(\lambda,t)$ are $\ 1$ and $m$ indices of them are $-1.$ This
implies that the expression $(b_{2n-1}(\lambda,t))^{-1}b_{2n+2m-1}(\lambda,t)$
depends only on $ab,$ since
\begin{equation}
b_{2n-1}(\lambda,t)=b^{2n}%
{\textstyle\prod\limits_{s=1}^{2n-1}}
\left(  \lambda-(2\pi(n-s)+t)^{2}\right)  ^{-1}.
\end{equation}
Similarly, the expression $(b_{2n-1}^{^{\prime}}(\lambda,t))^{-1}%
b_{2n+2m-1}^{^{\prime}}(\lambda,t)$ depends only on $ab,$ where%
\begin{equation}
b_{2n-1}^{^{\prime}}(\lambda,t)=a^{2n}%
{\textstyle\prod\limits_{s=1}^{2n-1}}
\left(  \lambda-(2\pi(n-s)-t)^{2}\right)  ^{-1}.
\end{equation}
Therefore the right-hand side of (30) also depends only on $ab,$ that is,
\begin{equation}
B(\lambda,t)B^{^{\prime}}(\lambda,t)=f(\lambda,t,ab)
\end{equation}
Using these and arguing as in the proof of Theorem 1 we get the other proof of
Theorem 1.
\end{remark}

Theorem 1 shows that if all the eigenvalues of $H_{t}(a,b)$ for all values of
$t\in(-\pi,\pi]$ are given then one can determine only $ab.$ However, the
following simple theorem shows that one can determine $ab$ by given
subsequence of the eigenvalues of $H_{t}(a,b)$ for some value of $t.$

\begin{theorem}
If for some value of $t\in\lbrack0,\pi]$ and for some sequence $\{n_{k}\}$ the
eigenvalues $\lambda_{n_{k}}(t,a,b)=:\lambda_{n_{k}}(t)$ of $H_{t}(a,b)$ are
given, then one can constructively determine $ab.$
\end{theorem}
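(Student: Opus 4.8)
The plan is to recover $ab$ from the leading high-energy term in the asymptotics of the $\lambda_{n_k}(t)$, which is governed by the characteristic equation (24). Since an infinite sequence of distinct integers is unbounded we may assume $|n_k|\to\infty$, and throughout we write $x_k:=2\pi n_k+t$, so that $x_k-2\pi=2\pi(n_k-1)+t$ and $x_k+2\pi=2\pi(n_k+1)+t$. Consider first the generic case $t\in(0,\pi)$ and fix $\rho\in(0,\tfrac{\pi}{2})$ with $\rho<\min\{t,\pi-t\}$, so that $t\in[\rho,\pi-\rho]$; then for every $k$ with $|n_k|>N(\rho)$ the number $\lambda_{n_k}(t)$ is the unique eigenvalue of $H_t(a,b)$ in $U(n_k,t)$ and, by (24),
\[
\lambda_{n_k}(t)-x_k^{2}=A(\lambda_{n_k}(t),t,ab)=\sum_{p\ge1}a_{2p-1}(\lambda_{n_k}(t),t).
\]

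The heart of the matter is the term $a_{1}$. By (16) and (10),
\[
a_{1}(\lambda,t)=ab\Bigl[\bigl(\lambda-(x_k-2\pi)^{2}\bigr)^{-1}+\bigl(\lambda-(x_k+2\pi)^{2}\bigr)^{-1}\Bigr],
\]
and the crucial point is that these two reciprocals nearly cancel when $\lambda$ is close to $x_k^{2}$: combining the fractions and inserting $\lambda_{n_k}(t)=x_k^{2}+O(|n_k|^{-1}\ln|n_k|)$ from (8) gives
\[
a_{1}(\lambda_{n_k}(t),t)=\frac{ab}{2(x_k^{2}-\pi^{2})}+O(|n_k|^{-3}\ln|n_k|)=\frac{ab}{2x_k^{2}}+O(|n_k|^{-3}\ln|n_k|),
\]
so $a_{1}$ is in fact $O(|n_k|^{-2})$, one power smaller than the bound $O(|n_k|^{-1})$ that (22) supplies for $p=1$. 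The remaining terms contribute only $O(|n_k|^{-3})$: indeed $a_{3}(\lambda_{n_k}(t),t)=O(|n_k|^{-3})$ by (21)--(22), and the tail $\sum_{p\ge3}a_{2p-1}(\lambda_{n_k}(t),t)$ is of order $R_{4}=O(|n_k|^{-4})$ by (15) and (23). Hence $\lambda_{n_k}(t)-x_k^{2}=\dfrac{ab}{2x_k^{2}}+O(|n_k|^{-3}\ln|n_k|)$, and multiplying by $2x_k^{2}$ yields the explicit, effectively computable formula
\[
ab=2\lim_{k\to\infty}(2\pi n_k+t)^{2}\bigl(\lambda_{n_k}(t)-(2\pi n_k+t)^{2}\bigr)
\]
in terms of the given data $t$ and $\{\lambda_{n_k}(t)\}$.

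It remains to treat the endpoint values $t=0$ and $t=\pi$, and this is the only place requiring genuine care, since there (24) must be replaced by formula (30) of Remark 1: $\lambda_{n_k}(t)-(2\pi n_k+t)^{2}$ then equals $A(\lambda,t)$ (or $A'(\lambda,t)$) plus a $\pm\sqrt{B(\lambda,t)B'(\lambda,t)}$-type correction. By (33) the right-hand side of (30) depends only on $ab$; moreover, by (31)--(32), $B(\lambda,t)B'(\lambda,t)$ is of vastly smaller order than $A(\lambda,t)$ --- super-exponentially small, which is precisely the Harrell--Avron--Simon phenomenon --- whereas the leading term of $A(\lambda,t)$ is again $ab/(2(2\pi n_k+t)^{2})$ by the computation of $a_{1}$ above; hence the identical limit formula recovers $ab$ in these cases as well. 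In short, the only real content of the theorem is the observation that the principal high-energy correction to $\lambda_{n_k}(t)$ equals $ab/(2(2\pi n_k+t)^{2})$ with a known, nonzero coefficient, and this follows at once from (24) together with the estimates (21)--(23) established before the statement.
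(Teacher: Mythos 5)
Your proof is correct and takes essentially the same route as the paper: both extract $ab$ from the $k=1$ term of (16), exploiting the near-cancellation that gives $a_{1}(\lambda_{n}(t),t)=\frac{ab}{2(2\pi n+t)^{2}}+O(n^{-3})$, bound the higher terms via (21)--(23), and recover $ab$ as $\lim_{k\to\infty}2(2\pi n_{k}+t)^{2}\bigl(\lambda_{n_{k}}(t)-(2\pi n_{k}+t)^{2}\bigr)$. The only (harmless) divergence is at the endpoints $t=0,\pi$, where the paper simply cites the known asymptotic formula from [4], whereas you rederive the same leading term from equation (30) of Remark 1 together with the super-exponential smallness of $B(\lambda,t)B'(\lambda,t)$.
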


\begin{proof}
Let $t\in(0,\pi).$ Then there exists $\rho\in(0,\frac{\pi}{2})$ such that
$t\in\lbrack\rho,\pi-\rho].$ Without loss of generality and for simplicity of
notation assume that $\lambda_{n}(t)$ for $n\geq N(\rho)$ are given. It
follows from (24), (25), (21) and (22) that
\[
\lambda_{n}(t)=(2\pi n+t)^{2}+O(\frac{1}{n}).
\]
Using it in (16) for $k=1$ we obtain
\[
a_{1}(\lambda_{n}(t))=\frac{ab}{(2\pi n+t)^{2}+O(n^{-1})-(2\pi(n-1)+t)^{2}}+
\]%
\[
\frac{ab}{(2\pi n+t)^{2}+O(n^{-1})-(2\pi(n+1)+t)^{2}}=
\]%
\[
\frac{ab}{2\pi(2\pi(2n-1)+2t)}-\frac{ab}{2\pi(2\pi(2n+1)+2t)}+O(\frac{1}%
{n^{3}})=\frac{ab}{2(2\pi n+t)^{2}}+O(\frac{1}{n^{3}}).
\]
This with (24), (25), (21) and (22) implies that
\[
\lambda_{n}(t)=(2\pi n+t)^{2}+\frac{ab}{2(2\pi n+t)^{2}}+O(\frac{1}{n^{3}}).
\]
From this we find $ab$ by calculating the limit: $\lim_{n\rightarrow\infty
}(\lambda_{n}(t)-(2\pi n+t)^{2})2(2\pi n+t)^{2}.$ If $t=0,$ then using the
well-known [4] asymptotic formula
\[
\lambda_{n}(t)=(2\pi n)^{2}+\frac{ab}{2(2\pi n)^{2}}+O(\frac{1}{n^{3}})
\]
in the same way we determine $ab.$ The case $t=\pi$ can be considered in the
same way.
\end{proof}

\begin{theorem}
The following conditions are equivalent

$1)$ $ab=cd$

$2)$ $S(H_{t}(a,b))=S(H_{t}(c,d))$ for all $t\in(-\pi,\pi]$

$3)$ $S(H_{t}(a,b))=S(H_{t}(c,d))$ for some $t\in(-\pi,\pi]$

$4)$ $\lambda_{n_{k}}(t,a,b)=\lambda_{n_{k}}(t,c,b)$ for some $t\in(-\pi,\pi]$
and for some sequence $\{n_{k}\}$.

$5)$ $S(H(a,b))=S(H(c,d)).$
\end{theorem}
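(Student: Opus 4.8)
The plan is to establish the cycle $1)\Rightarrow 2)\Rightarrow 3)\Rightarrow 4)\Rightarrow 1)$ and then attach $5)$ to it via $2)\Rightarrow 5)$ and $5)\Rightarrow 4)$; this makes all five conditions equivalent. Four of these implications come almost for free. The implication $1)\Rightarrow 2)$ is precisely Theorem 1. The implication $2)\Rightarrow 3)$ is trivial. The implication $2)\Rightarrow 5)$ is immediate from the Floquet decomposition $S(H(a,b))=\bigcup_{t\in(-\pi,\pi]}S(H_t(a,b))$ recalled at the beginning of the paper: if the fibres agree for every $t$, then so do their unions. And $4)\Rightarrow 1)$ is Theorem 2: after replacing $t$ by $|t|$ — which changes nothing, since $S(H_t(a,b))$ is the level set $\{\,F=\cos t\,\}$ of the Hill discriminant $F$ and $\cos t=\cos(-t)$ — the construction of Theorem 2 extracts $ab$ from the sequence $\{\lambda_{n_k}(t,a,b)\}$ and $cd$ from $\{\lambda_{n_k}(t,c,d)\}$; since the two sequences coincide by hypothesis (and we may assume $n_k\to\infty$), we get $ab=cd$.

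For $3)\Rightarrow 4)$, suppose $S(H_t(a,b))=S(H_t(c,d))$ for some $t$, which we may take in $[0,\pi]$. If $t\in(0,\pi)$, choose $\rho\in(0,\frac{\pi}{2})$ with $t\in[\rho,\pi-\rho]$; then the simplicity statement quoted before Theorem 1 says that, for all large $|n|$, the disk $U(n,t)$ carries a single eigenvalue of each operator, namely $\lambda_n(t,a,b)$ and $\lambda_n(t,c,d)$, so since $S(H_t(a,b))\cap U(n,t)=S(H_t(c,d))\cap U(n,t)$ is that one point we get $\lambda_n(t,a,b)=\lambda_n(t,c,d)$ for all large $|n|$, which is $4)$ with $n_k=k$. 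If $t\in\{0,\pi\}$, then $U(n,t)$ still meets the common spectrum for large $|n|$, and choosing $\mu_n\in S(H_t(a,b))\cap U(n,t)=S(H_t(c,d))\cap U(n,t)$ gives a sequence of eigenvalues common to $H_t(a,b)$ and $H_t(c,d)$; this already yields $4)$, and fed into Theorem 2 it yields $1)$ directly.

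The one genuinely new step is $5)\Rightarrow 4)$, and this is where the real work lies. Fix $\rho\in(0,\frac{\pi}{2})$, and for $|n|>N(\rho)$ consider the set $\Gamma_n(a,b):=\{\lambda_n(t,a,b):t\in[\rho,\pi-\rho]\}$. I would first prove that $\Gamma_n(a,b)$ is exactly the intersection of $S(H(a,b))$ with the tube $V_n:=\bigcup_{t\in[\rho,\pi-\rho]}U(n,t)$, a set that depends only on $n$ and $\rho$. The point is that the uniform asymptotics $\lambda_m(s)=(2\pi m+s)^2+O(\ln|m|/|m|)$ localize every large eigenvalue near some $(2\pi m+s)^2$, while $|(2\pi m+s)^2-(2\pi n+t)^2|\gg 1$ whenever $(m,s)\neq(n,t)$ with $s,t\in[\rho,\pi-\rho]$ and $|n|$ large; checking in addition (via $S(H_s(a,b))=S(H_{-s}(a,b))$) that the mirror fibres $s\in[-(\pi-\rho),-\rho]$ contribute no new points, one concludes that the only eigenvalues in $V_n$ are the $\lambda_n(t,a,b)$ with $t\in[\rho,\pi-\rho]$. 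Next, since $F(\lambda_n(t,a,b))=\cos t$ and $\cos$ is injective on $[\rho,\pi-\rho]\subset(0,\pi)$, the map $t\mapsto\lambda_n(t,a,b)$ is a homeomorphism of $[\rho,\pi-\rho]$ onto $\Gamma_n(a,b)$ (that $\Gamma_n(a,b)$ is an analytic arc is already established in the proof of Theorem 1); hence $\Gamma_n(a,b)$ has exactly two endpoints, $\lambda_n(\rho,a,b)\in U(n,\rho)$ and $\lambda_n(\pi-\rho,a,b)\in U(n,\pi-\rho)$, and for large $|n|$ these two disks are disjoint. The same description holds for $(c,d)$. Now, if $5)$ holds, then $\Gamma_n(a,b)=S(H(a,b))\cap V_n=S(H(c,d))\cap V_n=\Gamma_n(c,d)$ for all large $|n|$; comparing the endpoints that lie in the disk $U(n,\rho)$ gives $\lambda_n(\rho,a,b)=\lambda_n(\rho,c,d)$ for all large $|n|$, which is condition $4)$ with $t=\rho$ and $n_k=k$. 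Through $4)\Rightarrow 1)$ this closes the circle, so $1)\Leftrightarrow 2)\Leftrightarrow 3)\Leftrightarrow 4)\Leftrightarrow 5)$.

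The main obstacle is this last implication, specifically the identification $S(H)\cap V_n=\Gamma_n$: one must argue that equality of the two full spectra localizes, tube by tube, to an equality of individual high-energy bands, after which matching their endpoints is elementary. Everything else is either one of the already-proved Theorems 1 and 2 or a routine disk-separation estimate.
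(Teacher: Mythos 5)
Your handling of the cycle $1)\Rightarrow2)\Rightarrow3)\Rightarrow4)\Rightarrow1)$ and of $2)\Rightarrow5)$ matches the paper (the paper cites Theorem 1 for $1)\Rightarrow2),5)$, declares $2)\Rightarrow3)\Rightarrow4)$ clear, and cites Theorem 2 for $4)\Rightarrow1)$; your disk-localization gloss on $3)\Rightarrow4)$ is fine). The problem is the step you yourself identify as the real work, $5)\Rightarrow4)$. Your key claim $S(H(a,b))\cap V_n=\Gamma_n(a,b):=\{\lambda_n(t,a,b):t\in[\rho,\pi-\rho]\}$ is false: the spectrum of $H(a,b)$ does not stop at the parameter value $t=\rho$. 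For $s$ slightly less than $\rho$ (within $O(1/n)$ of it) the eigenvalue $\lambda_n(s,a,b)\approx(2\pi n+s)^2$ still lies inside the disk $U(n,\rho)\subset V_n$, so $S(H(a,b))\cap V_n$ is a strictly larger arc whose endpoints are the points where the curve $t\mapsto\lambda_n(t,a,b)$ crosses $\partial V_n$. Those exit points occur at parameter values $s_1(a,b)<\rho$ and $s_2(a,b)>\pi-\rho$ that depend on the potential; for $(c,d)$ the exit parameters are in general different. Hence even granting $S(H(a,b))\cap V_n=S(H(c,d))\cap V_n$ and matching endpoints, you only obtain $\lambda_n(s_1(a,b),a,b)=\lambda_n(s_1(c,d),c,d)$ with possibly unequal quasimomenta, which is not condition $4)$ (which requires the \emph{same} $t$ on both sides). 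The deeper issue is structural: any region depending only on $n$ and $\rho$ truncates the two spectral arcs at operator-dependent parameter values, so "endpoints of the truncation" are artifacts of the tube, not intrinsic features of the set $S(H)$, and no such matching can pin down a common $t$.

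The paper avoids this by invoking Theorem 5 of [10]: for $|n|>N$ the connected component $\Gamma_n$ of $S(H)$ is a separated simple analytic arc whose endpoints are $\lambda_n(0)$ and $\lambda_n(\pi)$. These endpoints are \emph{intrinsic} topological features of the set $S(H)$ (the arc genuinely terminates there, because of the symmetry $t\leftrightarrow-t$ the parametrization turns around at $t=0$ and $t=\pi$), so equality of the spectra forces equality of the components and hence of their endpoints, giving $\lambda_n(0,a,b)=\lambda_n(0,c,d)$, i.e.\ condition $4)$ with $t=0$. This requires the uniform analysis near $t=0,\pi$ from [10] (see Remark 1), which is precisely the regime your argument, built only on the $t\in[\rho,\pi-\rho]$ asymptotics, cannot reach. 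To repair your proof you would need either to import that component-endpoint result or to extract $ab$ from $S(H)$ by some other intrinsic asymptotic quantity.
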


\begin{proof}
By Theorem 1, $1)$ implies $2)$ and $5)$. It is clear that $2)\Longrightarrow
3)\Longrightarrow4).$ By Theorem 2, $4)\Longrightarrow1).$ Thus $1)$, $2),$
$3)$ and $4)$ are equivalent. It remains to show that $5)$ implies at least
one of them. By Theorem 5 of [10] there exists $N$ such that for $|n|>N$ the
component $\Gamma_{n}$ of the spectrum of the operator $H$ is separated simple
analytic arc with end points $\lambda_{n}(0)$ and $\lambda_{n}(\pi).$
Therefore $5)$ implies $4).$
\end{proof}

\ Now we obtain some consequences of Theorem 1. First consequence is the
generalization of formula (3.25) of [1], which is the extension of the
asymptotic formula of Harrell-Avron-Simon [2,7], for the case $a\neq b.$ For
simplicity of reading we write this result in notation of [1]. Let
$\lambda_{n}^{\pm}(a,b)$ be the eigenvalue of the Mathieu operator with
potential%
\begin{equation}
ae^{-i2x}+be^{i2x}%
\end{equation}
and with periodic (if $n$ is even) or antiperiodic (if $n$ is odd) boundary
condition. It is proved in [1] that if $b=a,$ then the following asymptotic
formula holds
\begin{equation}
\lambda_{n}^{+}-\lambda_{n}^{-}=\pm8a^{n}4^{-n}((n-1)!)^{-2}(1-\frac{a^{2}%
}{4n^{3}}+O(\frac{1}{n^{4}})).
\end{equation}
It is clear that Theorem 1 continues to hold for (34), since this case can be
reduced to the case (2) by substitution $s=\pi x$.

\begin{corollary}
For every complex numbers $a$ and $b$ the following formula holds
\begin{equation}
\lambda_{n}^{+}(a,b)-\lambda_{n}^{-}(ab)=\pm8(ab)^{\frac{n}{2}}4^{-n}%
((n-1)!)^{-2}(1-\frac{ab}{4n^{3}}+O(\frac{1}{n^{4}})).
\end{equation}

\end{corollary}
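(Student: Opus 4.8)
The plan is to reduce Corollary 1 to Theorem 1 together with the already-known Harrell--Avron--Simon-type formula (36) for the self-adjoint case $b=\bar a$, or rather its obvious analytic extension to $b=a$ used in the excerpt. First I would observe, as the paper already notes, that Theorem 1 applies verbatim to the Mathieu potential $ae^{-i2x}+be^{i2x}$ after the substitution $s=\pi x$, so the spectrum of the periodic/antiperiodic problem depends on $a,b$ only through the product $ab$. In particular, for fixed $n$, the two eigenvalues $\lambda_n^{\pm}(a,b)$ in the relevant disk depend only on $ab$; this is precisely what lets us \emph{define} $\lambda_n^{\pm}(ab)$ unambiguously and write the left-hand side of (37) as a function of $ab$ alone.

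Next I would choose, for a given value of the product $ab$, a convenient representative pair. The natural choice is $c=d=\sqrt{ab}$ (fixing either square root), so that $cd=ab$ and the potential becomes $\sqrt{ab}\,(e^{-i2x}+e^{i2x})=2\sqrt{ab}\cos 2x$, i.e.\ a classical Mathieu potential with real-type coefficient $c=d$. For this representative the hypothesis $b=a$ of formula (36) is satisfied with $a$ replaced by $\sqrt{ab}$, so (36) gives
\[
\lambda_n^{+}(\sqrt{ab},\sqrt{ab})-\lambda_n^{-}(\sqrt{ab},\sqrt{ab})
=\pm 8(\sqrt{ab})^{n}4^{-n}((n-1)!)^{-2}\Bigl(1-\frac{(\sqrt{ab})^{2}}{4n^{3}}+O\bigl(\tfrac{1}{n^{4}}\bigr)\Bigr).
\]
Now $(\sqrt{ab})^{n}=(ab)^{n/2}$ and $(\sqrt{ab})^{2}=ab$, so the right-hand side is exactly the right-hand side of (37). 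By Theorem 1 (in its Mathieu form), $\lambda_n^{\pm}(a,b)=\lambda_n^{\pm}(\sqrt{ab},\sqrt{ab})$, so the left-hand sides agree as well, and (37) follows.

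The one point that needs a little care—and the only real obstacle—is the legitimacy of invoking (36) for a \emph{complex} parameter $\sqrt{ab}$, since Harrell--Avron--Simon and [1] state it for real (or at least for the self-adjoint) Mathieu operator. The resolution is the remark already made in the excerpt just before the corollary: both sides of (36), for fixed $n$, are analytic functions of the coefficient (the eigenvalues $\lambda_n^{\pm}$ are branches of algebraic functions of the potential's Fourier coefficient, analytic near each fixed $n$ by the Rouché argument of the preamble, and the right-hand side is manifestly entire in $a$), and they agree on the real axis, hence on all of $\mathbb{C}$ by the identity theorem. Thus (36) holds for every complex value of its parameter, in particular for $\sqrt{ab}$, and the substitution above is valid. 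I would state this analytic-continuation step explicitly, then assemble the three ingredients—Theorem 1, the choice $c=d=\sqrt{ab}$, and the continued formula (36)—into the short chain of equalities above to conclude.
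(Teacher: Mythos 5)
Your proposal is correct and coincides with the paper's first proof of the corollary: set $c=d=(ab)^{1/2}$, invoke Theorem 1 (transferred to the Mathieu normalization via $s=\pi x$) to get $\lambda_{n}^{\pm}(a,b)=\lambda_{n}^{\pm}(c,c)$, and substitute into (35) replacing $a^{2}$ by $ab$. The only addition is your analytic-continuation caveat, which the paper sidesteps by citing (35) for complex $a$ with $b=a$ directly; otherwise the arguments are the same.
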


\begin{proof}
First proof of (36): Let $c=(ab)^{\frac{1}{2}}.$ By Theorem 1, $\lambda
_{n}^{\pm}(a,b)=\lambda_{n}^{\pm}(c,c).$ Therefore in (35) replacing $a^{2}$
with $ab\ $\ we obtain (36).

Second proof of (36): $\lambda_{n}^{\pm}(a,b)=n^{2}+z$ for large $n$ is an
eigenvalue of periodic or antiperiodic boundary condition if and only if $z$
is a root of the equation (18) of [3]:%
\[
(z-a(n,z))^{2}=B^{+}(n,z)B^{-}(n,z),
\]
where $a(n,z),B^{+}(n,z)$,$B^{-}(n,z)$ are defined as $A(\lambda
,t)),B(\lambda,t),B^{^{\prime}}(\lambda,t)$ for $t=0.$ Hence by Remark 1
$a(n,z)$ and $B^{+}(n,z)B^{-}(n,z)$ depend only on $ab.$ Therefore arguing as
above we obtain (36) from (35).
\end{proof}

Now we consider the operator $L_{t}(q)$ generated by (1) and (3) when%
\begin{equation}
q\in L_{1}[0,1],\text{ }q_{n}=0,\forall n=0,-1,-2,...
\end{equation}
Spectral theory of the operator $L(q)$ generated in $L_{2}(-\infty,\infty)$ by
the expression (1) with the potential $q$ satisfying (37) and the additional
condition $%
{\textstyle\sum\nolimits_{n}}
\mid q_{n}\mid<\infty$ is studied by Gasymov [5].

\begin{theorem}
$(a)$The eigenvalues of the operators $L_{t}(q)$ for $t\in(-\pi,\pi]$ with
potential (37) are $(2\pi n+t)^{2}$ ,where $n\in\mathbb{Z}$. These eigenvalues
for $t\neq0,\pi$ are simple. The eigenvalues $(2\pi n)^{2}$ for $n\in
\mathbb{Z}\backslash\{0\}$ and $(2\pi n+\pi)^{2}$ for $n\in\mathbb{Z}$ are
double eigenvalues of $L_{0}(q)$ and $L_{\pi}(q)$ respectively. The theorem
continues to hold if (37) is replaced by%
\begin{equation}
q\in L_{1}[0,1],\text{ }q_{n}=0,\forall n=0,1,2,...
\end{equation}

$(b)$ Let $\Psi_{n,t}(x)$ be the eigenfunction of the operator $L_{t}(q)$
corresponding to the eigenvalue $(2\pi n+t)^{2}$ and normalized as
\begin{equation}
(\Psi_{n,t},e^{i(2\pi n+t)x})=1,
\end{equation}
where $t\neq0,\pi$ and $q$ satisfies (37). Then\textit{ }%
\begin{equation}
\text{ }\Psi_{n,t}(x)=e^{i(2\pi n+t)x}+\sum_{p\in\mathbb{N}}c_{p}%
(t)e^{i(2\pi(n+p)+t)x}),
\end{equation}
where $c_{1}(t)=q_{1}d_{1}(t),$ $d_{p}(t)=((2\pi n+t)^{2}-(2\pi(n+p)+t)^{2}%
)^{-1},$
\[
c_{2}(t)=d_{2}(t)(q_{2}+\frac{q_{1}q_{1}}{(2\pi n+t)^{2}-(2\pi(n+1)+t)^{2}}),
\]%
\begin{equation}
c_{p}=d_{p}\left(  q_{p}+\sum_{k=1}^{p-1}\sum_{n_{1},n_{2},...,n_{k}}%
\frac{q_{n_{1}}q_{n_{2}}...q_{n_{k}}q_{p-n_{1}-n_{2}-...-n_{k}}}{%
{\textstyle\prod\limits_{s=1}^{k}}
[(2\pi n+t)^{2}-(2\pi(n+p-n_{1}-n_{2}-...-n_{s})+t)^{2}]}\right)
\end{equation}
for $p=3,4,...$ and
\begin{equation}
\{n_{1},n_{2},...,n_{s},p-n_{1}-n_{2}-...-n_{s}\}\subset\mathbb{N}%
=:\{1,2,...,\}
\end{equation}
for $s=1,2,...,p-1.$ The theorem continues to hold if (37) is replaced by (38)
and $\mathbb{N}$ in  formulas (40) and (42) is replaced by $-\mathbb{N}%
$\textit{.}
\end{theorem}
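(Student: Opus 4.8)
The plan is to work in the orthonormal basis $e_{m}(x):=e^{i(2\pi m+t)x}$, $m\in\mathbb{Z}$, of $L_{2}[0,1]$; every $e_{m}$ already satisfies (3), since $e_{m}(1)=e^{it}e_{m}(0)$ and $e_{m}'(1)=e^{it}e_{m}'(0)$. Writing a candidate eigenfunction as $y=\sum_{m}v_{m}e_{m}$, the equation $-y''+qy=\lambda y$ is equivalent, at the level of Fourier coefficients, to the recursion
\[
\bigl((2\pi m+t)^{2}-\lambda\bigr)v_{m}=-\sum_{j\ge 1}q_{j}\,v_{m-j},\qquad m\in\mathbb{Z},
\]
which is ``triangular'' precisely because, by (37), $q_{j}=0$ for all $j\le 0$. (To make the termwise manipulations legitimate when only $q\in L_{1}[0,1]$, note that a genuine eigenfunction $y$ is automatically continuous, hence $y''=(q-\lambda)y\in L_{1}$, hence $y\in C^{1}$ with $v_{m}=O(m^{-2})$; this justifies the two integrations by parts, whose boundary terms cancel by (3), and the convolution identity for the Fourier coefficients of $qy$.)

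For $t\ne 0,\pi$ one has $(2\pi m+t)^{2}\ne(2\pi n+t)^{2}$ whenever $m\ne n$, so for each fixed $n$ I solve the recursion with $\lambda=(2\pi n+t)^{2}$ by setting $v_{m}=0$ for $m<n$, $v_{n}=1$ and, for $m>n$,
\[
v_{m}=\frac{1}{(2\pi n+t)^{2}-(2\pi m+t)^{2}}\sum_{j=1}^{m-n}q_{j}\,v_{m-j}.
\]
Since $|q_{j}|\le\|q\|_{L_{1}}$ and $|(2\pi n+t)^{2}-(2\pi m+t)^{2}|\gtrsim m^{2}$ as $m\to\infty$, a telescoping estimate for the increasing partial sums $S_{m}:=\sum_{n\le k\le m}|v_{k}|$ (each step multiplies $S_{m-1}$ by a factor $1+O(m^{-2})$) yields $\sum_{m}|v_{m}|<\infty$, so $\Psi_{n,t}:=\sum_{m}v_{m}e_{m}\in C[0,1]$ and (3) holds termwise. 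A Ces\`aro--summation argument applied to the trigonometric sums $\sigma_{M}\Psi_{n,t}$ --- for which $\frac{d^{2}}{dx^{2}}\sigma_{M}\Psi_{n,t}=\sigma_{M}\!\bigl((q-\lambda)\Psi_{n,t}\bigr)\to(q-\lambda)\Psi_{n,t}$ in $L_{1}$, while the two boundary identities hold trivially --- then shows that $\Psi_{n,t}$ lies in the domain of $L_{t}(q)$ and that $-\Psi_{n,t}''+q\Psi_{n,t}=(2\pi n+t)^{2}\Psi_{n,t}$. Unfolding the recursion (repeatedly substituting for $v_{n-j}$ and keeping track of the number $k$ of substitutions) produces exactly (40)--(42) with $c_{p}(t)=v_{n+p}$, while (39) is the normalization $v_{n}=1$. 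This proves part (b) under (37).

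To obtain \emph{all} the eigenvalues with their correct multiplicities I will use the Hill discriminant $\Delta(\lambda)$ of $-y''+qy$, an entire function of order $1/2$, together with the standard facts that $\lambda$ is an eigenvalue of $L_{t}(q)$ iff $\Delta(\lambda)=2\cos t$, that no eigenvalue has multiplicity exceeding $2$, and that an eigenvalue is double exactly when $t\in\{0,\pi\}$ and $\Delta'(\lambda)=0$. By the previous paragraph, for every $t\in(0,\pi)$ and $n\in\mathbb{Z}$ the number $(2\pi n+t)^{2}$ is an eigenvalue of $L_{t}(q)$, so $\Delta\bigl((2\pi n+t)^{2}\bigr)=2\cos t$; since $2\cos\sqrt{(2\pi n+t)^{2}}=2\cos|2\pi n+t|=2\cos t$, the entire functions $\Delta(\lambda)$ and $2\cos\sqrt{\lambda}$ agree at all such points. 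As $(n,t)$ ranges over $\mathbb{Z}\times(0,\pi)$ these points exhaust $(0,\infty)\setminus\{(k\pi)^{2}:k\ge 1\}$, which has an accumulation point, so $\Delta(\lambda)\equiv 2\cos\sqrt{\lambda}$. Part (a) now follows by inspection: $2\cos\sqrt{\lambda}=2\cos t$ exactly for $\lambda=(2\pi n+t)^{2}$, and $\Delta'(\lambda)=-\sin\sqrt{\lambda}/\sqrt{\lambda}$ is nonzero at these points when $t\ne 0,\pi$ (where the labels are moreover pairwise distinct), vanishes at $\lambda=(2\pi n)^{2}$ for $n\ne 0$ and at $\lambda=(2\pi n+\pi)^{2}$ for all $n$, and equals $-1$ at $\lambda=0$. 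Finally, the substitution $x\mapsto 1-x$ is a unitary equivalence between $L_{t}(q)$ and $L_{-t}(\tilde q)$ with $\tilde q(x)=q(1-x)$, whose Fourier coefficients are $q_{-k}$; this interchanges conditions (37) and (38), carries part (a) over verbatim, and transports the eigenfunction formula (40)--(42) to the case (38) with $\mathbb{N}$ replaced by $-\mathbb{N}$ in (40) and (42).

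I expect the only genuinely delicate point to be the low regularity $q\in L_{1}[0,1]$: one must verify carefully that ``$(v_{m})$ satisfies the recursion'' is the same as ``$\sum_{m}v_{m}e_{m}$ lies in the domain of $L_{t}(q)$ and is an eigenfunction of it'', i.e. that the series-constructed $\Psi_{n,t}$ really is twice absolutely differentiable with $\Psi_{n,t}''=(q-\lambda)\Psi_{n,t}$ almost everywhere; the Ces\`aro device is exactly what upgrades the soft bound $\sum_{m}|v_{m}|<\infty$ to this. Everything else --- the telescoping convergence estimate, the combinatorial unfolding into (40)--(42), the discriminant bookkeeping, and the reflection $x\mapsto 1-x$ --- is routine.
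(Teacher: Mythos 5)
Your argument is correct, and for the substantive half of the theorem it is organized differently from the paper's. The paper does not construct the eigenfunction from scratch: for part (a) it reuses the iteration machinery (15)--(17) already set up for Theorem 1, observes that under (37) at least one index among $n_{1},\dots,n_{k},-n_{1}-\dots-n_{k}$ is non\-positive so every $a_{k}$ vanishes, and gets $\lambda_{n}(t)=(2\pi n+t)^{2}$ only for $t\in[\rho,\pi-\rho]$ and $|n|\gg1$; for part (b) it takes the eigenfunction as \emph{given}, iterates the identity $(\Psi_{n,t},e^{i(2\pi(n+p)+t)x})=d_{p}(t)\sum_{n_{1}}q_{n_{1}}(\Psi_{n,t},e^{i(2\pi(n+p-n_{1})+t)x})$ past $p$ steps, and must prove a remainder estimate $r_{m}\to0$ (its (47)--(51)) to conclude that the coefficients with $p<0$ vanish and that $(\Psi_{n,t},e^{i(2\pi n+t)x})\neq0$. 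You instead exploit the triangularity of the Fourier recursion to build the eigenfunction directly for \emph{every} $n$ and every $t\in(0,\pi)$, with each $v_{m}$ a finite sum, so no remainder estimate is needed and the combinatorial unfolding gives (40)--(42) immediately; the price is that you must verify the series you built actually lies in the domain of $L_{t}(q)$ (your Ces\`aro step --- this is the one point you should write out, since $\sum_{m}|v_{m}||m|$ need not converge and termwise differentiation is not available; the clean route is to set $g:=(q-\lambda)\Psi_{n,t}\in L_{1}$, check $\widehat{g}_{m}=-(2\pi m+t)^{2}v_{m}$ via the convolution identity, and compare $\Psi_{n,t}$ with a double antiderivative of $g$). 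Note also that your construction alone only exhibits \emph{one} eigenfunction with $v_{n}=1$; that every eigenfunction can be normalized by (39) then needs the simplicity from part (a), which you do have. Both arguments finish part (a) identically, via the Hill discriminant and the identity theorem for entire functions ($\Delta\equiv2\cos\sqrt{\lambda}$), and both dispose of case (38) by the reflection $x\mapsto1-x$. On balance your route is more self-contained and yields existence for all $n$ at once, while the paper's route avoids the domain-membership verification and simultaneously shows that the eigenspace is spanned by a function of the form (40).
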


\begin{proof}
$(a)$ Formulas (15)-(17) for $q\in L_{1}[0,1]$ with estimations that
guaranties the equality $\lim_{m\rightarrow\infty}R_{m+1}=0$ are proved in
[9]. Since at least one of the indices $n_{1},n_{2},...,n_{k},$ $-n_{1}%
-n_{2}-...-n_{k}$ is not positive number, by (16) and (37), $a_{k}(\lambda
_{n}(t))=0$ and hence $A_{m}(\lambda_{n}(t))=0.$ Therefore letting $m$ tend to
infinity and using (8) and (15) we obtain $\lambda_{n}(t)=(2\pi n+t)^{2}$ for
$t\in\lbrack\rho,\pi-\rho]$ and $n\gg1.$ Now arguing as in the proof of
Theorem 1 we get the proof of this theorem in case (37). Case (38) can be
proved in the same way.

$(b)$ Let $\Psi_{n,t}(x)$ be the normalized eigenfunction of the operator
$L_{t}(q)$ corresponding to the eigenvalue $(2\pi n+t)^{2}$ and $t\neq0,\pi.$
(In the end we prove that there exists an eigenfunction of the operator
$L_{t}(q)$ satisfying (39). For simplicity of notation we denote it also by
$\Psi_{n,t}.$) Since $\{e^{i(2\pi(n+p)+t)x}:p\in\mathbb{Z}\}$ is an
orthonormal basis we have
\begin{equation}
\Psi_{n,t}(x)-(\Psi_{n,t},e^{i(2\pi n+t)x})e^{i(2\pi n+t)x}=\sum
_{p\in\mathbb{Z}\backslash\{0\}}(\Psi_{n,j},e^{i(2\pi(n+p)+t)x})e^{i(2\pi
(n+p)+t)x}.
\end{equation}
To find $(\Psi_{n,j},e^{i(2\pi(n+p)+t)x})$ we iterate (6) (in (6) replace $-k$
with $p)$ by using
\begin{equation}
(q\Psi_{n,t},e^{i(2\pi(n+p)+t)x})=\sum_{n_{1}}q_{n_{1}}(\Psi_{n,t}%
(x),e^{i(2\pi(n+p-n_{1})+t)x})
\end{equation}
(see (14) of [9]) and the equality $\lambda_{n}(t)=(2\pi n+t)^{2}$ (see
$(a)$). Namely, (6) with (44) implies
\begin{equation}
(\Psi_{n,t},e^{i(2\pi(n+p)+t)x})=d_{p}(t)\sum_{n_{1}}q_{n_{1}}(\Psi
_{n,t},e^{i(2\pi(n+p-n_{1})+t)x}).
\end{equation}
Now arguing as in the proof of (15), that is, isolating the terms in the
right-hand side of (45) containing the multiplicand $(\Psi_{n,t},e^{i(2\pi
n+t)x})$ which occurs in the case $n_{1}=p$ and using again (45) for the other
terms and etc., we get%
\begin{equation}
(\Psi_{n,t},e^{i(2\pi(n+p)+t)x})=c_{p}(t)(\Psi_{n,t},e^{i(2\pi n+t)x})+r_{m},
\end{equation}
where
\begin{equation}
r_{m}=\sum_{n_{1},n_{2},...,n_{m}}\frac{d_{p}(t)q_{n_{1}}q_{n_{2}}...q_{n_{m}%
}(q\Psi_{n,t},e^{i(2\pi(n+p-n_{1}-n_{2}-...-n_{m})+t)x})}{%
{\textstyle\prod\limits_{s=1,2,...,m}}
[(2\pi n+t)^{2}-(2\pi(n+p-n_{1}-n_{2}-...-n_{s})+t)^{2}]},
\end{equation}
$p-n_{1}-n_{2}-...-n_{s}\neq0$ for $s=1,2,...,m$ and $m>p.$ The indices
$n_{1},n_{2},...,n_{p-1}$ taking part in the expression of $c_{p}(t)$ satisfy
(42). Therefore if $p<0,$ then the set of these indices is empty, that is,
$c_{p}(t)=0.$ Moreover if $p>0$ then, by (42), the number of summands of
$c_{p}(t)$ is finite.

Now we prove that $r_{m}\rightarrow0$ as $m\rightarrow\infty.$ Let $M=\sup
_{n}\mid q_{n}\mid.$ By (37) $n_{k}\geq1$ for  $k=1,2,...,m$ and hence
$n_{1}+n_{2}+...+n_{s}\geq s.$ Using this and taking into account that
\ $(q\Psi_{n,t},e^{i(2\pi(n+p-n_{1}-n_{2}-...-n_{m})+t)x})\rightarrow0$ as
$m\rightarrow\infty$ we obtain
\begin{equation}
\mid r_{m}\mid\leq\mid d_{p}(t)\mid%
{\textstyle\prod\limits_{s=1,2,...,m}}
\left(  \sum_{j\geq s,\text{ }j\neq p}\frac{M}{\mid(2\pi n+t)^{2}%
-(2\pi(n+p-j+t)^{2}\mid}\right)
\end{equation}
for $m\gg1$. Clearly there exist $K(t)$ such that
\begin{equation}
\sum_{j\geq s,\text{ }j\neq p}\mid\frac{M}{[(2\pi n+t)^{2}-(2\pi
(n+p-j+t)^{2}]}\mid\leq K(t).
\end{equation}
for $s=1,2,...,m.$ Moreover if $s\geq4(\mid n\mid+\mid p\mid)$ then%
\begin{equation}
\sum_{j\geq s}\mid\frac{M}{[(2\pi n+t)^{2}-(2\pi(n+p-j+t)^{2}]}\mid
<\sum_{j\geq s}\frac{M}{j^{2}}<\frac{M}{s}%
\end{equation}
Now using (48)-(50) we obtain
\begin{equation}
\mid r_{m}\mid\leq\frac{\mid d_{p}(t)\mid M^{m-4(\mid n\mid+\mid p\mid
)+1}(K(t))^{4(\mid n\mid+\mid p\mid)-1}}{4(\mid n\mid+\mid p\mid)(4(\mid
n\mid+\mid p\mid)+1)...m}%
\end{equation}
which implies that $r_{m}\rightarrow0$ as $m\rightarrow\infty.$ Therefore in
(46) letting $m$ tend to infinity we get
\begin{equation}
(\Psi_{n,t},e^{i(2\pi(n+p)+t)x})=c_{p}(t)(\Psi_{n,t},e^{i(2\pi n+t)x}).
\end{equation}
This with (43) shows that $(\Psi_{n,t},e^{i(2\pi n+t)x})\neq0.$ Therefore,
there exists eigenfunction, denoted again by $\Psi_{n,t},$ satisfying (39) and
for this eigenfunction, by (52), we have
\[
(\Psi_{n,t},e^{i(2\pi(n+p)+t)x})=c_{p}(t).
\]
Thus (40) is proved in case (37). The case (38) can be considered in the same way.
\end{proof}

\end{document}